\theoremstyle{plain}
\newtheorem{theorem}{Theorem}[section]
\newtheorem{lemma}[theorem]{Lemma}
\newtheorem{claim}[theorem]{Claim}
\newtheorem{proposition}[theorem]{Proposition}
\newtheorem{question}[theorem]{Question}
\theoremstyle{definition}\newtheorem{definition}[theorem]{Definition}
\theoremstyle{definition}\newtheorem{example}[theorem]{Example}
\theoremstyle{definition}\newtheorem{remark}[theorem]{Remark}
\numberwithin{equation}{section}
\newcommand{\R}{\mathbb{R}}
\newcommand{\om}{\omega}
\newcommand{\bd}{\begin{definition}}
\newcommand{\ed}{\end{definition}}
\DeclareMathOperator{\graph}{graph}
\DeclareMathOperator{\dom}{dom}
\DeclareMathOperator{\supp}{supp}
\newcommand{\mc}{\mathcal}
\begin{document}

\title{Naively Haar null sets in Polish groups}

\author[M\'arton Elekes]{M\'arton Elekes$^\ast$}
\thanks{$^\ast$Partially supported by the
Hungarian Scientific Foundation grants no.~104178 and no.~113047.}

\author[Zolt\'an Vidny\'anszky]{Zolt\'an Vidny\'anszky$^\ddag$}
\thanks{$^\ddag$Partially supported by the
Hungarian Scientific Foundation grants no.~104178 and no.~113047.}

\insert\footins{\footnotesize{MSC codes: Primary 03E15, 54H05; Secondary 54H11, 28A99, 03E17, 22F99}}
\insert\footins{\footnotesize{Key Words: Polish groups, non-locally compact Polish group, Haar null, Christensen, shy, prevalent, universally measurable, Fremlin, Problem FC}}

\begin{abstract}
Let $(G,\cdot)$ be a Polish group. We say that a set $X \subset G$ is \emph{Haar null} if there exists a universally measurable set $U \supset X$ and a Borel probability measure $\mu$ such that for every $g, h \in G$ we have
$\mu(gUh)=0$. We call a set $X$ \emph{naively Haar null} if there exists a Borel probability measure $\mu$ such that for every $g, h \in G$ we have $\mu(gXh)=0$. 

Generalizing a result of Elekes and Stepr\=ans, which answers the first part of Problem FC from Fremlin's list, we prove that in every abelian Polish group there exists a naively Haar null set that is not Haar null.

\end{abstract}
\maketitle

\section{Introduction}
Let $(G,\cdot)$ be a Polish group. It is well known that there exists a left Haar measure on $G$ (that is, a
regular left invariant Borel measure that is finite for compact sets and positive
for non-empty open sets) if and only if $G$ is locally compact. It can be proved that the ideal of left Haar measure zero sets does not depend on the choice of the measure, moreover, it coincides with the ideal of the right Haar null sets (that can be defined analogously). This ideal plays an important role in the study of locally compact groups and there are a lot of interesting non-locally  compact groups, so it is very natural to try to construct well-behaved generalizations of this notion in non-locally compact groups.

Christensen \cite{christ} suggested a generalization, which is widely used in diverse areas of mathematics. We will call a set
\emph{universally measurable} if it is measurable with respect to every Borel
probability measure and we identify Borel measures with their completions.

\begin{definition}
 A set $X \subset G$ is called \emph{Haar null} if there exists a universally measurable
  set $U
\supset X$ and a Borel probability measure $\mu$ on $G$ such that $\mu(gUh)=0$
for every $g, h \in G$.
\end{definition}

For a Haar null set we will call a corresponding measure a \emph{witness measure} and the set $U$ its \emph{universally measurable hull}. Notice that some authors use a slightly more restrictive notion, namely they require the hull to be a Borel set. These two notions provably differ in non-locally compact abelian Polish groups (see \cite{mienk}, where the above notion is called generalized Haar null), although in practice this makes little difference as the studied sets are typically Borel.

Christensen proved that in a locally compact Polish group a set is Haar null if and only if it is of measure zero with respect to a (or equivalently, every) Haar measure. He also showed that the collection of Haar null sets form a $\sigma$-ideal in every Polish group. 

Our paper is motivated by the first part of Problem FC on Fremlin's list \cite{frem}. The problem is whether we really need the universally measurable hulls in this definition. Let us consider the following notion.

\begin{definition}
 A set $X \subset G$ is called \emph{naively Haar null} if there exists a Borel probability measure $\mu$ on $G$ such that $\mu(gXh)=0$
for every $g, h \in G$.
\end{definition}

Using this terminology Fremlin's problem asks whether every naively Haar null set is Haar null. This question was answered by Elekes and Stepr\=ans \cite{elekstep}. Notice that this question makes sense in any uncountable Polish group, though the original question was formulated in $\R$.

It was observed by Dougherty \cite{dough} that under the Continuum Hypothesis (CH) the answer is negative in the groups of the form $G \times G$. In fact, it is easy to see that if we consider a well-ordering $<_W$ of $G$ in order type $\omega_1$ as a subset $W$ of $G \times G$ then both
$W$ and $(G \times G) \setminus W$ are naively Haar null. In particular, since $G \times G$ is clearly not Haar null and Haar null sets form a $\sigma$-ideal, we obtain that either $W$ or its complement is a naively Haar null, non-Haar null set. 

Elekes and Stepr\=ans proved that in $\R^n$ the assumption of CH can be dropped. In this paper we extend their result to every abelian Polish group, proving the following statement.

\begin{theorem}
\label{t:main}
 Let $G$ be an uncountable abelian Polish group. There exists a subset of $G$ that is naively Haar null but not Haar null. 
\end{theorem}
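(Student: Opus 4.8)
The plan is to deduce the theorem from the purely combinatorial statement that $G$ can be written as a disjoint union $G = A \sqcup B$ of two naively Haar null sets. Indeed, suppose such a partition exists. Since $G$ is a group, $\mu(G + t) = \mu(G) = 1 \ne 0$ for every Borel probability measure $\mu$ and every $t$, so $G$ itself is not Haar null; and Haar null sets form a $\sigma$-ideal by Christensen's theorem. Hence $A$ and $B$ cannot both be Haar null, for otherwise their union $G$ would be Haar null. Thus at least one of them, say $A$, is naively Haar null but not Haar null, which is exactly what we want. This mimics Dougherty's observation quoted above, except that the well-ordering used there is replaced by an explicit ZFC partition; the whole difficulty is pushed into constructing the partition.

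\textbf{What the partition requires.} Writing the group additively and using commutativity, $gXh = X + (g+h)$, so naive Haar nullity of a set $X$ with witness $\nu$ means precisely that every translate $X + t$ is $\nu$-measurable with $\nu(X + t) = 0$. If $A$ is witnessed by $\mu_A$ and $B = G \setminus A$ by $\mu_B$, then, since translation is a bijection and $\mu_B$ is a probability measure, $\mu_B(B + t) = \mu_B\big(G \setminus (A + t)\big) = 1 - \mu_B(A + t)$. Hence the partition we seek is equivalent to a single set $A \subseteq G$ together with two Borel probability measures $\mu_A, \mu_B$ such that for every $t \in G$ the translate $A + t$ is both $\mu_A$-null and $\mu_B$-conull. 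A Borel such $A$ cannot exist (it would be its own universally measurable hull, hence Haar null, and the same for its complement, forcing $G$ to be Haar null), so $A$ must be a genuinely non-measurable set built by transfinite recursion.

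\textbf{Finding the measures and reducing to a subgroup.} I would first arrange $\mu_A$ and $\mu_B$ to be \emph{uniformly transversal}: mutually singular in a way that persists under all translations. The model case is a closed subgroup $H \le G$ topologically isomorphic to the Cantor group, on which one takes $\mu_A$ to be Haar measure and $\mu_B$ a biased product (``unfair coin'') measure; by Kakutani's dichotomy every translate of one is singular to the other. To pass from such an $H$ to all of $G$, I would use a coset-transfer lemma: given $A_0 \subseteq H$ working inside $H$ for measures supported on $H$, and a transversal $S$ of the cosets of $H$, set $A = \bigcup_{s \in S}(A_0 + s)$. For any $g \in G$ exactly one coset of $H$ meets $\operatorname{supp}(\mu_A) \subseteq H$ after translating by $g$, and on it $A + g$ restricts to a translate $A_0 + h$ with $h \in H$; consequently $\mu_A(A + g) = \mu_A(A_0 + h) = 0$, and likewise $\mu_B(A + g) = 1$, so the construction transfers verbatim and $G \setminus A = \bigcup_{s}\big((H \setminus A_0) + s\big)$ inherits the dual property. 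The essential content is thus to produce two uniformly transversal measures on some uncountable closed subgroup; supplying such a subgroup (or, in the remaining cases such as torsion-free, connected, or $p$-adic type groups, an appropriate analogue of the two transversal measures) is exactly where I would invoke the structure theory of uncountable abelian Polish groups to reduce to finitely many group types.

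\textbf{The recursion and the main obstacle.} Inside $H$ the task becomes to partition $H = A_0 \sqcup B_0$ so that every translate $A_0 + h$ is $\mu_A$-null and every translate $B_0 + h$ is $\mu_B$-null. I would enumerate the $\mathfrak{c}$ translation constraints (using inner regularity to replace each ``null translate'' requirement by the demand that $A_0$, resp. $B_0$, avoid a prescribed compact set of nearly full measure for the other measure) and build $A_0$ by a recursion of length $\mathfrak{c}$, at stage $\alpha$ deciding membership of the points relevant to the $\alpha$-th constraint while respecting the $< \mathfrak{c}$ commitments made so far. The crux -- and the step I expect to be hardest -- is the feasibility of the inductive step: one must show that the already-committed sets remain negligible for both measures, and that the uniform transversality of $\mu_A, \mu_B$ leaves, in every translate and at every stage, enough undetermined mass to simultaneously drive $A_0 + h$ toward $\mu_A$-nullity and $B_0 + h$ toward $\mu_B$-nullity. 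Balancing these two opposing demands for all $\mathfrak{c}$ translates at once, with a single non-measurable set, is precisely the obstacle that the Continuum Hypothesis trivializes in Dougherty's argument and that must here be overcome in ZFC.
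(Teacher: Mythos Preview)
Your plan is genuinely different from the paper's, but it has a real gap: the entire argument is reduced to partitioning $G$ (or a closed subgroup $H$) into two naively Haar null pieces, and you explicitly leave the feasibility of the transfinite recursion that would accomplish this as ``the step I expect to be hardest'' without actually carrying it out. This is not a minor omission. The paper itself poses, as an open problem in its final section, whether every uncountable Polish group admits a naively Haar null set whose complement is also naively Haar null; your proposed route would settle this question in the abelian case. So the step you defer is not a routine bookkeeping recursion of the Dougherty type but precisely the unresolved heart of the matter, and nothing in your sketch indicates how the competing demands (every translate of $A_0$ is $\mu_A$-null while every translate of $H\setminus A_0$ is $\mu_B$-null) can be met simultaneously in ZFC. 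A second, smaller gap is the reduction step: many uncountable abelian Polish groups (e.g.\ $\mathbb{R}$, $\mathbb{T}$) have no closed subgroup isomorphic to $2^\omega$, and your appeal to ``structure theory'' for the remaining cases is not substantiated.

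By contrast, the paper never attempts a two-piece partition. It splits into cases. In the locally compact case it invokes a result of Farah--Solecki to find an uncountable Borel subgroup $H<G$ of uncountable index, observes that any partial transversal to $H$ is naively Haar null (a continuous measure supported on $H$ sees at most one point of each translate), and then runs a transfinite recursion---with the inductive step verified via a short lemma about perfect sets meeting positive Borel sets---to build a partial transversal that is not $\lambda$-null, hence not Haar null. In the non-locally compact case it imports machinery from \cite{mienk}: a Borel displacement map making certain compact products disjoint, together with a function whose graph escapes every $(\lambda\times\mu)$-null $G_\delta$; the image of that graph under a suitable continuous map is the desired set. Both constructions produce one set and verify non-Haar-nullness directly, sidestepping the partition problem entirely.
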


We have to treat the case of locally compact and non-locally compact Polish topological groups separately. We start with the locally compact case, which is essentially a transfinite construction, while to solve the non-locally compact case we use ideas from \cite{mienk}.

In fact, we will prove slightly more in both cases. A natural modification of the definition of Haar nullness that was investigated by several authors (\cite{rosendal}, \cite{solecki} etc.) is the following:

\begin{definition}
 A set $X \subset G$ is called \emph{left Haar null} if there exists a universally measurable
  set $U
\supset X$ and a Borel probability measure $\mu$ on $G$ such that $\mu(gB)=0$
for every $g \in G$.
\end{definition}

The naive version of this notion can be defined analogously. It is easy to see using convolution that in locally compact groups a set is left Haar null if and only if it is of measure zero with respect to a Haar measure.

In the locally compact case we show that every Polish group has a naively left Haar null set that is not Haar null.

In the non-locally compact case our results  (including of course the part cited from \cite{mienk}) can be generalized to every non-locally compact Polish group that admits a two-sided invariant metric.

\section{Preliminaries} 

We use the notation of \cite{cdst} and \cite{mienk}. 

A \emph{Polish group} is a topological group whose topology is Polish (a topology is called Polish if it is separable and completely metrizable). If $G$ is a Polish group we denote by $\mc{K}(G)$ the Polish space of non-empty compact subsets of $G$ (endowed with topology given by the Hausdorff metric). 
$\mathcal{P}(G)$ stands for the set of Borel probability measures on $G$ (i.~e.~the completions of probability measures
defined on the Borel sets). With the weak*-topology these measures form a Polish space. 

For $\mu \in
\mathcal{P}(G)$ we denote by $\supp(\mu)$ the support of $\mu$, that is, the smallest closed set $F$ such that $\mu(G \setminus F)=0$. The collection of Borel probability measures on $G$ with compact support is denoted by $\mathcal{P}_c(G)$. It can be shown that $\mc{P}_c(G)$ is a Borel subset of $\mc{P}(G)$ \cite[Lemma 3.2]{mienk}.

A simple but important observation is that if we have a Haar null set with witness measure $\mu$ then passing to a $\mu$-positive compact subset (\cite[Theorem 17.10]{cdst}) and normalizing we can obtain a witness measure with compact support, thus:
\vspace{-0.3mm}
\begin{equation}
\text{Every Haar null set has a witness measure with compact support.}
\label{e:cpt}
\end{equation}
\vspace{-0.1mm}
If $G$ is locally compact we will denote a left Haar measure on $G$ by  $\lambda$. 

As usual, $\mathbf{\Sigma}^0_\xi(G)$ and $\mathbf{\Pi}^0_\xi(G)$ stand for the $\xi$th additive and multiplicative level of
the Borel hierarchy in $G$. If it is clear from the context, we will omit the underlying Polish space. Continuous images of Borel sets are called analytic sets. A $\mathbf{\Sigma}^0_\xi(G)$ set $C$ is called \emph{$\mathbf{\Sigma}^0_\xi$-complete} if for every Polish space $X$ and $A \in \mathbf{\Sigma}^0_\xi(X)$ there exists a continuous map $f: X \to G$ such that $f^{-1}(C)=A$. It is not hard to prove that in an uncountable Polish space $X$ a $\mathbf{\Sigma}^0_\xi(X)$-complete set must be in 
$\mathbf{\Sigma}^0_\xi(X) \setminus \mathbf{\Pi}^0_\xi(X)$.

If $H \subset X \times Y$ and $x \in X$ then the $x$-section of $H$ is the set $H_x = \{y \in Y :
(x,y) \in H \}$. For a function $f \colon X \times Y \to Z$
the $x$-section is the function $f_x \colon Y \to Z$ defined by $f_x(y) =
f(x,y)$. 

For $A,B \subset G$ let $A \cdot B$ denote the Minkowski product of $A$ and $B$, i.~e.~the set $\{a\cdot b:a \in A, b\in B\}$, while $A^{-1}$ stands for the set $\{a^{-1}:a \in A\}$.

If $H < G$ then a \textit{partial left  (right) transversal to $H$} is a subset of $G$ that intersects every left (right) coset of $H$ in at most one point.

\section{Locally compact groups}
\begin{theorem}
\label{t:lc}
 
Every uncountable locally compact Polish group contains a naively left Haar null set that is not Haar null. In particular, every uncountable abelian Polish group contains a naively Haar null set that is not Haar null.
\end{theorem}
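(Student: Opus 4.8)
The plan is to reduce the statement to a concrete combinatorial construction and then carry it out by transfinite recursion. Since $G$ is locally compact, Christensen's theorem identifies the Haar null ideal with the $\lambda$-null ideal, and the convolution remark above does the same for the left-sided notion; hence \emph{not Haar null} is the same as \emph{positive outer Haar measure}, $\lambda^*(X)>0$. So it suffices to produce a set $X$ together with a single Borel probability measure $\mu$ with $\mu(gX)=0$ for every $g\in G$ (making $X$ naively left Haar null) while $\lambda^*(X)>0$ (making $X$ not Haar null). For the \emph{in particular}: in an abelian group $gXh=(gh)X$, so as $g,h$ range over $G$ the two-sided translates $gXh$ are exactly the one-sided translates $kX$; thus naively left Haar null coincides with naively Haar null and left Haar null with Haar null, and the first assertion immediately gives the second.

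The mechanism that forces $\mu(gX)=0$ is the notion of a partial left transversal. I would fix an uncountable compact $\lambda$-null set $C\subseteq G$ carrying a continuous (non-atomic) Borel probability measure $\mu$ with $\supp(\mu)=C$, chosen thin enough that the subgroup $H=\langle C\rangle$ it generates is still $\lambda$-null; note $H=\bigcup_n(C\cup C^{-1}\cup\{e\})^n$ is $F_\sigma$, hence Borel, and uncountable. The key point is that $C\subseteq H$, so every left translate $g^{-1}C$ lies inside a single left coset $g^{-1}H$. Consequently, if $X$ is a partial left transversal to $H$, then for every $g$ the set $gX\cap C$ has at most one element (its elements are $gx$ with $x\in X\cap g^{-1}H$), whence $\mu(gX)=\mu(gX\cap C)=0$ since $\mu$ is non-atomic. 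Thus \emph{any} partial left transversal to $H$ is automatically naively left Haar null with witness $\mu$, and the whole problem reduces to constructing a partial left transversal that is $\lambda$-thick.

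To obtain thickness I would build $X$ by transfinite recursion of length $\mathfrak{c}=|G|$. Enumerate the positive-measure compact sets as $\langle K_\alpha:\alpha<\mathfrak{c}\rangle$ (there are exactly $\mathfrak{c}$ of them since $\mathcal{K}(G)$ is Polish). At stage $\alpha$, having chosen $\{x_\beta:\beta<\alpha\}$ in pairwise distinct cosets, I would pick $x_\alpha\in K_\alpha\setminus\bigcup_{\beta<\alpha}x_\beta H$; this keeps $X=\{x_\alpha:\alpha<\mathfrak{c}\}$ a partial left transversal while guaranteeing $X\cap K_\alpha\neq\emptyset$ for every $\alpha$. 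Meeting every positive-measure compact set forces $\lambda^*(X)>0$: if $X$ were contained in a Borel null set $N$, then by inner regularity some compact $K\subseteq G\setminus N$ would have $\lambda(K)>0$ and yet $X\cap K=\emptyset$, a contradiction. Combined with the reductions above, this gives exactly the desired $X$.

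The crux, and the step I expect to be the genuine obstacle, is justifying that the choice at stage $\alpha$ is possible, i.e.\ that $\bigcup_{\beta<\alpha}x_\beta H$ --- a union of fewer than $\mathfrak{c}$ left translates of the $\lambda$-null set $H$ --- never covers the positive-measure set $K_\alpha$. This is not automatic in ZFC: when $\operatorname{cov}(\mathcal{N})<\mathfrak{c}$ some null sets are coverable by fewer than $\mathfrak{c}$ translates, and a careless recursion can get stuck. The resolution is to choose $C$ (equivalently $H$) so delicately that fewer than $\mathfrak{c}$ translates of $H$ cannot cover a set of positive Haar measure; equivalently, every positive-measure compact set meets $\mathfrak{c}$-many cosets of $H$. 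This is exactly where the \emph{thinness} of the Cantor set $C$ must be exploited, in the spirit of the Elekes--Stepr\=ans analysis of covering the line by translates of a compact null set, and it is also where one verifies that such a $C$ exists inside \emph{every} uncountable locally compact Polish group --- using that the Haar measure lets one carve out null perfect sets and arrange rapid dissociation. Granting this covering property, the recursion goes through and produces the required naively left Haar null, non-Haar null set.
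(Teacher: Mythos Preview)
Your overall strategy matches the paper's: fix an uncountable Borel subgroup $H$ that is $\lambda$-null (hence of uncountable index), observe that any partial left transversal to $H$ is naively left Haar null witnessed by a continuous measure supported inside $H$, and then build a $\lambda$-thick partial transversal by a transfinite recursion of length $\mathfrak{c}$. You also correctly locate the only real difficulty: at stage $\alpha$ one must know that the $<\mathfrak{c}$ cosets already used do not swallow the target positive-measure set.

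The gap is that you do not actually resolve this difficulty. Your proposed fix --- choose $C$ so thin that fewer than $\mathfrak{c}$ translates of $H=\langle C\rangle$ can never cover a positive-measure set --- is left as an assertion, and it is unclear how to realise it in an arbitrary uncountable locally compact Polish group by controlling the thinness of $C$ alone. The paper's resolution is different and requires no special thinness of $H$ beyond ``uncountable analytic of uncountable index''. The device you are missing is a \emph{perfect partial left transversal} $P$ to $H$, which exists because $H$, having the Baire property and uncountable index, is meager. Two observations then close the recursion: (i) for any perfect $P$ and any Borel $\lambda$-positive $B$ there is $g$ with $|gP\cap B|=\mathfrak{c}$ (otherwise a continuous measure on $P$ would witness that $B$ is left Haar null, hence $\lambda$-null); and (ii) every translate $gP$ is itself a partial left transversal, so it meets a union of $|\alpha|<\mathfrak{c}$ left cosets of $H$ in at most $|\alpha|$ points. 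Taking $B=K_\alpha$ in (i) (or, as the paper prefers, the complement of a $G_\delta$ null set) and comparing with (ii) shows that $K_\alpha\not\subseteq\bigcup_{\beta<\alpha}x_\beta H$, so the recursion never stalls. With this one missing idea your argument is complete and essentially coincides with the paper's.
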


 First we prove an easy lemma.
  \begin{lemma}
  \label{l:easy} Let $P \subset G$ be a perfect set and $B$ be a Borel $\lambda$-positive set. Then there exists $g \in G$ such that $|gP \cap B|=\mathfrak{c}$.
 \end{lemma}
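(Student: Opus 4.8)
The plan is to produce the required translate by a Fubini/Tonelli averaging argument. Since $P$ is perfect it contains a homeomorphic copy of the Cantor space, so I can fix an atomless (continuous) Borel probability measure $\nu$ with $\nu(P)=1$. Consider the set
$$E = \{(g,p) \in G \times P : gp \in B\}.$$
Because multiplication is continuous and $B$ is Borel, $E$ is a Borel subset of $G \times P$; for each $g$ its vertical section is $E_g = \{p \in P : gp \in B\} = P \cap g^{-1}B$, and for each $p$ its horizontal section is $E^p = \{g \in G : gp \in B\} = Bp^{-1}$. Since $gE_g = gP \cap B$ and left translation by $g$ is injective, it suffices to produce a single $g$ with $|E_g| = \mathfrak{c}$.

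Next I would apply Tonelli's theorem to the product of the measure spaces $(G,\lambda)$ and $(P,\nu)$. Here $\lambda$ is $\sigma$-finite because a locally compact Polish group is second countable, hence $\sigma$-compact and $\lambda$ is finite on compacta, while $\nu$ is finite; so the product is $\sigma$-finite and Tonelli applies to the indicator of the Borel set $E$:
$$(\lambda \times \nu)(E) = \int_P \lambda(E^p)\, d\nu(p) = \int_G \nu(E_g)\, d\lambda(g).$$
For the first integral, observe that $\lambda(E^p) = \lambda(Bp^{-1}) > 0$ for every $p$: right translation preserves the $\lambda$-null ideal, since the modular function is everywhere positive and finite, so a $\lambda$-positive set stays $\lambda$-positive after right translation. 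Thus the integrand is strictly positive on the probability space $(P,\nu)$, whence $(\lambda \times \nu)(E) > 0$.

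Feeding this into the third term gives $\int_G \nu(E_g)\, d\lambda(g) > 0$, so $\nu(E_g) > 0$ for a set of $g$ of positive $\lambda$-measure; fix any such $g$. Since $\nu$ is atomless, every countable set is $\nu$-null, so $E_g$ is uncountable; being Borel (a section of the Borel set $E$), it then has cardinality $\mathfrak{c}$ by the perfect set property. Therefore $|gP \cap B| = |gE_g| = |E_g| = \mathfrak{c}$, as required. I expect the only points needing genuine care to be the verification of the Tonelli hypotheses (Borel measurability of $E$ and $\sigma$-finiteness of $\lambda$) and the passage from positive atomless measure to cardinality $\mathfrak{c}$; the substantive idea is simply that averaging $\nu(P \cap g^{-1}B)$ against $\lambda$ reproduces the manifestly positive quantity $\int_P \lambda(Bp^{-1})\, d\nu(p)$, forcing some section to be large.
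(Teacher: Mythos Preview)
Your argument is correct. Both proofs fix a continuous probability measure $\nu$ supported in $P$, but the paper proceeds by contradiction: assuming no translate $gP$ meets $B$ in $\mathfrak{c}$ points, the perfect set property forces every $B\cap gP$ to be countable, whence $\nu(gB)=0$ for all $g$; thus $B$ is Borel left Haar null, and the paper then invokes the known fact (proved via convolution) that in a locally compact group this forces $\lambda(B)=0$, a contradiction. Your direct Tonelli computation is precisely that convolution argument unpacked, so your route is more self-contained and even yields the slightly stronger conclusion that $|gP\cap B|=\mathfrak{c}$ for a $\lambda$-positive set of $g$; the paper's version is shorter only because it cites the equivalence as a black box.
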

 \begin{proof}
 Suppose the contrary. The indirect assumption and the Borelness of the set $B \cap gP$ imply that for every $g \in G$ we have $|B \cap gP| \leq \aleph_0$. Fix a continuous probability measure $\nu$ with $\supp(\nu) \subset P$. Then for every $g \in G$ we have \[|gB \cap \supp(\nu)| \leq |gB \cap P|=|B \cap g^{-1}P| \leq \aleph_0,\]
 
so $\nu(gB)=0$. Thus $B$ is a Borel left Haar null set, consequently a $\lambda$-null set, a contradiction.
 \end{proof}
 
 The following statement is the crucial point of our argument.
 
\begin{lemma}
\label{l:basic}
Suppose that there exists an uncountable analytic subgroup $H<G$ such that $|G:H|>\aleph_0$.
\begin{enumerate}
 \item There exists a set $X \subset G$ that is naively left Haar null, but not $\lambda$-null.
 \item If $H$ is also a normal subgroup then there exists a set $X$ that is naively Haar null, but not $\lambda$-null.
\end{enumerate}

\end{lemma}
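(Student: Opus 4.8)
The plan is to construct a single set $X$ that handles both parts simultaneously: $X$ will meet every left coset of $H$ in a countable set, and $X$ will fail to be $\lambda$-null. Before anything else I would record that $H$ is $\lambda$-null. Being analytic, $H$ is universally measurable, hence $\lambda$-measurable; if $\lambda(H)>0$ then by the Steinhaus theorem $H\cdot H^{-1}=H$ would contain a neighbourhood of the identity, so $H$ would be open and thus of countable index, contradicting $|G:H|>\aleph_0$. By left-invariance every left coset $gH$ then satisfies $\lambda(gH)=\lambda(H)=0$.

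Now fix a perfect set $P\subseteq H$ (it exists since $H$ is uncountable analytic) and a continuous Borel probability measure $\nu$ concentrated on $P\subseteq H$. I claim that $\nu$ witnesses both conclusions as soon as $X$ meets each left coset countably. For part (1), since $\nu$ lives on $P\subseteq H$ we have $\nu^*(gX)=\nu^*(gX\cap P)\le \nu^*(gX\cap H)$, and $gX\cap H=g(X\cap g^{-1}H)$ is countable because $X\cap g^{-1}H$ is; continuity of $\nu$ then gives $\nu(gX)=0$ for every $g$, so $X$ is naively left Haar null. For part (2), assuming $H$ is normal so that left and right cosets coincide, the same computation applies to $gXh\cap H=g\,(X\cap (hg)^{-1}H)\,h$ (using $Hh^{-1}=h^{-1}H$), which is again countable; hence $\nu(gXh)=0$ for all $g,h$ and $X$ is naively Haar null.

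It remains to build such an $X$ that is not $\lambda$-null, and here I would run a transfinite recursion of length $\mathfrak c$ using Lemma~\ref{l:easy}. It suffices that $X$ meet every compact set of positive measure, for then $X$ cannot be contained in any $\lambda$-null $G_\delta$ set (the complement of such a set is co-null and, by inner regularity, contains a positive-measure compact set that $X$ meets). So I would enumerate the positive-measure compact sets as $\{K_\alpha:\alpha<\mathfrak c\}$ and construct $X=\{x_\alpha:\alpha<\mathfrak c\}$ while maintaining the invariant that each coset has received at most countably many points. At stage $\alpha$, applying Lemma~\ref{l:easy} to $P$ and a positive-measure Borel piece of $K_\alpha$ produces an element $g$ with $|gP\cap K_\alpha|=\mathfrak c$; all these points lie in the single coset $gH$, and I would take $x_\alpha$ to be one of them lying in a coset that is not yet saturated. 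Choosing $x_\alpha\in K_\alpha$ secures $X\cap K_\alpha\neq\emptyset$, and placing it in a non-saturated coset preserves the invariant.

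The main obstacle is precisely this availability step: at stage $\alpha$ I must guarantee that $K_\alpha$ still meets a coset that has received fewer than $\aleph_0$ points. The already-saturated cosets form a union of fewer than $\mathfrak c$ $\lambda$-null sets, and in ZFC such a union can a priori cover a fixed positive-measure compact set (when $\mathrm{cov}(\mathcal N)<\mathfrak c$, where $\mathcal N$ is the null ideal), so a naive ``fresh coset at each step'' recursion fails. The crux is therefore a bookkeeping scheme that keeps the saturated cosets from ever covering $K_\alpha$ modulo $\lambda$-null sets --- for instance by distributing the chosen points so that the saturated cosets remain a $\lambda$-null set --- after which Lemma~\ref{l:easy}, applied to $K_\alpha$ with the saturated cosets deleted, always returns a usable good coset. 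This is exactly where both hypotheses are used: that $H$ is uncountable (so the perfect set $P$ and hence $\nu$ exist) and that $|G:H|>\aleph_0$ (so there are enough cosets over which to spread the continuum many points without violating the countability invariant).
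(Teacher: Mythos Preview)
Your reduction --- build $X$ meeting every left coset of $H$ in a countable set while meeting every positive-measure compact set --- is sound, and the verification that such an $X$ is naively (left) Haar null via a continuous $\nu$ supported in $H$ is correct. The genuine gap is that you have not constructed $X$: you name the obstacle (at stage $\alpha$ the already-used cosets form a union of $<\mathfrak c$ null sets that may swallow $K_\alpha$ when $\mathrm{cov}(\mathcal N)<\mathfrak c$) but your invocation of Lemma~\ref{l:easy} does not overcome it. With $P\subseteq H$, the lemma hands you $\mathfrak c$ points of $K_\alpha$ all lying in a \emph{single} coset $gH$, and you have no control over which $gH$ arises; if that coset is already among the bad ones you have gained nothing, and ``apply Lemma~\ref{l:easy} to $K_\alpha$ with the saturated cosets deleted'' presupposes that the remainder has positive measure, which is precisely what is in doubt. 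No bookkeeping on where earlier points landed changes which $g$ the lemma returns, nor can it force the union of used cosets to stay $\lambda$-null.

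The paper's argument differs in exactly this point: it takes $P$ to be a perfect \emph{partial left transversal} to $H$ (such a $P$ exists because $H$, having the Baire property and uncountable index, is meager; see \cite{kechbeck}), and builds $X$ itself as a partial transversal by recursion over the null $G_\delta$ sets $G_\alpha$, seeking $x_\alpha\notin G_\alpha\cup\bigcup_{\beta<\alpha}x_\beta H$. If no such $x_\alpha$ existed, $S=\bigcup_{\beta<\alpha}x_\beta H$ would be co-null and hence contain a co-null $F_\sigma$ set $B$; but every translate $gP$ meets $S$ in $<\mathfrak c$ points (two points of $gP$ in the same $x_\beta H$ would put two points of $P$ in a common coset of $H$), and this contradicts Lemma~\ref{l:easy} applied to $P$ and $B$. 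So the missing idea is to take the perfect set \emph{transverse} to $H$ rather than inside it, and to use Lemma~\ref{l:easy} contrapositively --- to show the used cosets cannot be co-null --- rather than to locate a point of $K_\alpha$ directly.
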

\begin{proof}
 First we prove that if $X$ is a partial left transversal to $H$ then $X$ is naively left Haar null and if additionally $H$ is a normal subgroup then a partial transversal is Haar null. 
 
 Since $H$ is an uncountable analytic set it contains a perfect subset, so we can fix a continuous Borel probability measure $\mu$ with $\supp(\mu) \subset H$. Now, for an arbitrary $g \in G$ we have \[|gX \cap \supp(\mu)|= |X \cap g^{-1}\supp(\mu)| \leq  |X \cap g^{-1}H| \leq 1,\]
 so $\mu(gX)=0$.
 
 If $H$ is a normal subgroup then being a left or a right transversal is the same and repeating this argument we obtain that for every $g,h \in G$
 \[|gXh \cap \supp(\mu)|= |X \cap g^{-1}\supp(\mu)h^{-1}| \leq  |X \cap g^{-1}Hh^{-1}| =\]
 \[=|X \cap g^{-1}h^{-1}hHh^{-1}|=|X \cap g^{-1}h^{-1}H| \leq 1,\]
 which shows that $X$ is naively Haar null.
 
 So it is enough to construct a non-$\lambda$-null partial left transversal to $H$. Notice first that since $H$ has the Baire property, the condition $|G:H|>\aleph_0$ implies that $H$ is meager, hence by \cite[p. 5]{kechbeck} there exists a partial perfect left transversal $P$ to $H$. 
 
 Enumerate the $\lambda$-null $G_\delta$ sets as $\{G_\alpha:\alpha<\mathfrak{c}\}$. Suppose that we have already constructed a partial left transversal to $H$, denoted by $\{x_\beta:\beta<\alpha\}$, such that $x_\beta \not \in G_\beta$ for every $\beta<\alpha$.
 
 \textit{Claim.} There exists an $x_\alpha$ such that 
 \[x_\alpha \not \in \bigcup_{\beta<\alpha} x_\beta H \cup G_\alpha\]
 and this is clearly enough to continue the induction.
  
 Suppose the contrary. Then the set $S=\bigcup_{\beta<\alpha} x_\beta H$ is a co-null set. 
 
 Now notice that for every $g \in G$ we have $|gP \cap S|<\mathfrak{c}$: otherwise there would exist an ordinal $\beta<\alpha$ such that \[gp_1=x_\beta h_1 \text{ and } gp_2=x_\beta h_2\]
 for some distinct $p_1,p_2 \in P$ and $h_1,h_2 \in H$. But then \[p^{-1}_2 p_1=h_2^{-1}h_1 \in H,\]
 which contradicts the fact that $P$ was a partial left transversal. 
 
 So every left translate of $P$ intersects $S$ in less then $\mathfrak{c}$ many points and since $S$ is co-null, it contains a co-null $F_\sigma$ set $B$. By Lemma \ref{l:easy} this is impossible, finishing the proof of the claim. 
 
We have the claim, thus the induction can be carried out. The resulting set $X=\{x_\alpha:\alpha<\mathfrak{c}\}$ is a partial left transversal to $H$ which contains a point outside of every $\lambda$-null $G_\delta$ set, consequently it is a non-$\lambda$-null set, so we are done. 
 \end{proof}

 \begin{lemma}
 \label{l:farah}
  Every uncountable Polish group $G$ contains an uncountable Borel subgroup of uncountable index.
 \end{lemma}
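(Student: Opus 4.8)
The plan is to produce a single subgroup that is simultaneously large (uncountable) and small (meager), and then to convert meagerness into uncountable index. The key reduction is a category observation: if $H<G$ is a \emph{meager} subgroup, then $H$ has uncountable index. Indeed, if $[G:H]\le\aleph_0$ then $G$ would be a countable union of left cosets $gH$, each of which is meager (being the image of $H$ under the homeomorphism $x\mapsto g\cdot x$), contradicting the Baire category theorem in the Polish group $G$. So it suffices to exhibit an uncountable meager Borel subgroup.

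Next I would use the standard fact that the subgroup generated by a compact set is $F_\sigma$: if $C\subset G$ is compact then $S:=C\cup C^{-1}\cup\{e\}$ is compact and symmetric, the $n$-fold Minkowski product $S^n$ is compact (a continuous image of $S\times\cdots\times S$), and $\langle C\rangle=\bigcup_{n} S^n$ is therefore $\sigma$-compact, hence Borel. Since $G$ is uncountable Polish it is non-discrete and contains a Cantor set, and for any such $C$ the group $\langle C\rangle$ is an uncountable $F_\sigma$ subgroup. Thus the whole problem reduces to choosing the Cantor set $C$ so that $\langle C\rangle$ is meager, and since $\langle C\rangle=\bigcup_n S^n$ is a countable union, it is enough to guarantee that every $S^n$ is nowhere dense.

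The heart of the argument is therefore the construction of a Cantor set $C$ all of whose finite product sets $S^n$ are nowhere dense. I would build $C=\bigcap_m\bigcup_{|s|=m}F_s$ by a Cantor scheme of compact sets $F_s$ ($s\in 2^{<\om}$) with $F_{s0},F_{s1}\subset F_s$ disjoint and $\mathrm{diam}(F_s)\to 0$, carrying out a fusion that meets, one at a time, the requirements indexed by pairs $(n,k)$, namely ``$W_k\not\subset S^n$'', where $(W_k)$ is a fixed countable base. To meet $(n,k)$ it suffices to reserve a single point $p\in W_k$ and then keep all future pieces away from the closed configurations that would realize $p$ as a product, so that $p\notin S^n$ for the final $C$; since multiplication is uniformly continuous on compact sets and the diameters shrink, each already-met requirement survives later refinements, being an open condition. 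A compact set omitting a point of every basic open set has empty interior, so every $S^n$ ends up nowhere dense and $\langle C\rangle$ is meager. The main obstacle is exactly this bookkeeping: one must ensure that carving out room for $(n,k)$ does not force some product $S^{n'}$ to fill a previously reserved gap, which is why the pieces have to be chosen extremely small and well separated relative to the finitely many constraints active at each stage.

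Finally I would remark that the construction is only genuinely needed in the locally compact case. If $G$ is \emph{not} locally compact then it is not $\sigma$-compact, so for \emph{any} compact perfect $C$ the $\sigma$-compact subgroup $\langle C\rangle$ already has uncountable index: a countable union of its cosets is $\sigma$-compact and cannot exhaust $G$. Equivalently, such an $\langle C\rangle$ is automatically meager, since a non-meager $F_\sigma$ subgroup $\bigcup_n S^n$ would have some $S^n$ with nonempty interior, hence would be open and locally compact, forcing $G$ to be locally compact as well. In this way the uniform statement follows, with all of the real work concentrated in the locally compact case.
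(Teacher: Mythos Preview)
Your proposal is correct and follows a genuinely different route from the paper's proof. The paper does not build the subgroup by hand; instead it quotes the Farah--Solecki theorem that every uncountable Polish group contains a $\mathbf{\Sigma}^0_3$-complete subgroup $H$, and then observes that $\mathbf{\Sigma}^0_3$-completeness forces both $|H|>\aleph_0$ (countable sets are $\mathbf{\Sigma}^0_2$) and $[G:H]>\aleph_0$ (countable index would make $H$ a $\mathbf{\Pi}^0_3$ set). Your approach, by contrast, constructs $H=\langle C\rangle$ for a thin Cantor set $C$ and uses the Baire category reduction ``meager subgroup $\Rightarrow$ uncountable index'' in place of any complexity calculation.

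It is worth noting that the paper explicitly anticipates your argument: the Remark immediately following the lemma says that one can prove it directly by ``constructing by induction a perfect scheme so that the corresponding perfect set is compact and very `thin': it generates a $\sigma$-compact first category subgroup of $G$.'' So you have fleshed out precisely the alternative the authors had in mind. The trade-off is clear: the Farah--Solecki citation gives a two-line proof and even pins down the Borel complexity of $H$, but imports a substantial external result; your construction is self-contained and elementary, at the cost of the fusion bookkeeping you describe. Your separate handling of the non-locally compact case (where any $\sigma$-compact subgroup is automatically meager, since a non-meager $F_\sigma$ subgroup would be open and locally compact) is a nice observation that isolates where the genuine difficulty lies.

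One small point on presentation: in the locally compact step, the phrase ``keep all future pieces away from the closed configurations that would realize $p$ as a product'' is a bit elliptic. The concrete mechanism is the one you allude to later: at the stage handling $(n,k)$, pick centers $x_s$ of the current pieces, note that the finitely many length-$\le n$ words in the $x_s^{\pm 1}$ form a finite set, choose $p\in W_k$ outside that finite set (possible since $W_k$ is uncountable), and then shrink the pieces by continuity so that the corresponding product set still misses $p$. Making this explicit would remove any doubt that the requirement can always be met without disturbing earlier ones.
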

\begin{proof}
 By \cite{farah} every uncountable Polish group contains a $\mathbf{\Sigma}^0_3$-complete subgroup $H < G$. $|G:H| \leq \aleph_0$ would imply that 
 $H=G \setminus (\bigcup_{n \in \omega} g_n H)$ for some $g_n \in G$. But the set $\bigcup_{n \in \omega} g_n H$ is $\mathbf{\Sigma}^0_3$ so $H$ would be a $\mathbf{\Pi}^0_3$ set which contradicts the $\mathbf{\Sigma}^0_3$-completeness of $H$.
 
 In order to see that $H$ is uncountable just notice that every countable set is $\mathbf{\Sigma}^0_2$, so a countable set cannot be $\mathbf{\Sigma}^0_3$-complete.
\end{proof}
\begin{remark}
 It is not hard to prove Lemma \ref{l:farah} directly, constructing by induction a perfect scheme so that the corresponding perfect set is compact and very ``thin'': it generates a $\sigma$-compact first category subgroup of $G$. Compactness implies that the generated subgroup is also $\sigma$-compact. 
\end{remark}

Putting together Lemmas \ref{l:farah} and \ref{l:basic} we obtain Theorem \ref{t:lc}, thus finishing the proof of Theorem \ref{t:main} in the locally compact case. 

\section{Non-locally compact groups}

In this section we will use ideas and a large part of the proof from \cite{mienk}. Unfortunately, this cannot be avoided, since, apart from Proposition \ref{p:eltol_haar}, the used proof segment is not explicitly citable. 

\begin{theorem}

\label{t:nlc}
 
 Suppose that $G$ is a non-locally compact abelian Polish group. Then there exists a set $X$ that is naively Haar null but not Haar null.
\end{theorem}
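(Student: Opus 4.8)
The plan is to reduce the statement to a one-sided, compact-support problem and then to mimic the transversal construction of Lemma~\ref{l:basic}, importing the non-local-compactness through the machinery of \cite{mienk} and Proposition~\ref{p:eltol_haar}. Since $G$ is abelian, $gXh$ ranges over all translates of $X$, so $X$ is naively Haar null precisely when there is a single $\mu\in\mc{P}(G)$ with $\mu^*(X+g)=0$ for every $g\in G$, and likewise the two-sided and one-sided notions of Haar nullness coincide. By \eqref{e:cpt} every witness measure for genuine Haar nullness may be taken with compact support; since $\mc{P}_c(G)$ has cardinality $\mathfrak{c}$, I can enumerate all candidate witnesses as $\{\nu_\alpha:\alpha<\mathfrak{c}\}$. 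Thus the goal becomes: construct $\mu$ and $X$ so that $\mu^*(X+g)=0$ for all $g$ (naive Haar nullness), while for every $\alpha<\mathfrak{c}$ and every universally measurable $U\supseteq X$ some translate satisfies $\nu_\alpha(U+g)>0$ (failure of Haar nullness).

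Second, I would build $X$ as a kind of partial transversal, exactly as in the locally compact case. Using \cite{mienk} and Proposition~\ref{p:eltol_haar}, the non-local-compactness of $G$ (together with the two-sided invariant metric, which is what the argument really needs) should provide a continuous measure $\mu$ with compact support and a partition of a $\mu$-large region into uncountably many ``perfect fibres'' arranged so that each translate $\supp(\mu)+g$ meets at most one fibre; a set $X$ meeting each fibre in at most one point then satisfies $\mu^*(X+g)=0$ for all $g$, giving naive Haar nullness just as the transversal condition did in Lemma~\ref{l:basic}. The set $X$ itself is produced by a transfinite recursion of length $\mathfrak{c}$: at stage $\alpha$ I add one more fibre-point to $X$, chosen to lie outside the ``forbidden'' region generated by the previously chosen points and to defeat the candidate witness $\nu_\alpha$. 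The selection at each stage rests on a non-locally-compact analogue of Lemma~\ref{l:easy}, supplied by Proposition~\ref{p:eltol_haar}, asserting that a single perfect fibre cannot be made $\nu_\alpha$-null in all of its translates, so that room to continue always remains.

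The main obstacle is the failure-of-Haar-nullness half, and this is exactly where the locally compact proof does not transfer. There we enumerated the $\lambda$-null $G_\delta$ sets and used that ``not $\lambda$-null'' equals ``not Haar null''; here there is no Haar measure, the candidate witnesses $\nu_\alpha$ form a genuinely $\mathfrak{c}$-sized moving target, and, worst of all, the hull $U$ is only universally measurable, so there are $2^{\mathfrak{c}}$ of them and they cannot be listed. The crux is therefore to show that the transversality/thinness maintained during the recursion is by itself strong enough to prevent any compact-support $\nu$ from admitting a universally measurable hull $U\supseteq X$ with all translates $\nu$-null---i.e.\ to replace Lemma~\ref{l:easy} by a statement robust against arbitrary universally measurable hulls rather than merely against $\lambda$-null $G_\delta$ sets. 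Establishing this uniform defeat of every witness and every hull, using only the invariant metric and non-local-compactness (which is the content borrowed from \cite{mienk}), is the hard part of the argument.
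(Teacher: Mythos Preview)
You have correctly located the obstacle---the $2^{\mathfrak{c}}$ universally measurable hulls cannot be enumerated---but you have not supplied the idea that overcomes it, and your recursion as stated cannot be completed. Saying that at stage $\alpha$ you ``defeat the candidate witness $\nu_\alpha$'' is not meaningful: defeating $\nu_\alpha$ requires that \emph{every} universally measurable $U\supseteq X$ have a $\nu_\alpha$-positive translate, and a single new point cannot enforce this against hulls that are only determined after the entire recursion is finished. The transversality condition alone carries no such strength; it guarantees naive Haar nullness, nothing more.

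The paper's proof solves this by a different architecture. Rather than one recursion over the measures, the set $X$ is built as a disjoint union, indexed over \emph{all} $\mu\in\mc{P}_c(G)$ simultaneously (via a coding $c:\mc{P}_c(G)\to 2^\omega$ and the map $t$ of Proposition~\ref{p:eltol_haar}), of images under continuous translation maps $\Psi_\mu:2^\omega\times G\to G$ of graphs $\graph(f_\mu)$. The key reduction you are missing is this: if $U\supseteq X$ is a universally measurable hull witnessed by $\mu$, then $T=\Psi_\mu^{-1}(U)$ is universally measurable (Lemma~\ref{l:univ}), every vertical section $T_x$ is $\mu$-null (since $\Psi_\mu(x,\cdot)$ is a translation and $U$ is Haar null for $\mu$), hence by Fubini $(\lambda\times\mu)(T)=0$, and by regularity $T$ lies inside a $(\lambda\times\mu)$-null $G_\delta$ set $S$. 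Thus one only needs $\graph(f_\mu)$ to escape every $(\lambda\times\mu)$-null $G_\delta$ set, and there are only $\mathfrak{c}$ of those---so a separate length-$\mathfrak{c}$ recursion for each fixed $\mu$ (Proposition~\ref{p:Bor_haar}) suffices. The passage \emph{universally measurable hull $\to$ pullback $\to$ Fubini $\to$ $G_\delta$ cover} is exactly the missing step that collapses the $2^{\mathfrak{c}}$ hulls to a $\mathfrak{c}$-sized enumeration; without it your recursion has no target to hit.
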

We will use the following proposition from \cite{mienk}.
\begin{proposition}
 \label{p:eltol_haar}
 Let $C \in \mathcal{K}(G)$ be fixed. Then there exists a Borel map $t:
\mathcal{K}(G) \times 2^\om \times 2^\om \to G$ so that
\begin{enumerate}
\item\label{p:eltol1_haar} if $(K,x,y) \not = (K',x',y')$ are elements of $\mathcal{K}(G) \times 2^\om \times 2^\om$ then
\[
(K \cdot C^{-1} \cdot t(K,x,y)) \cap (K'  \cdot C^{-1} \cdot t(K',x',y'))= \emptyset
\]
\item\label{p:eltol2_haar} for every $K \in \mathcal{K}(G)$ and $y \in 2^\om$  the
  map $t(K,\cdot, y)$ is continuous.
\end{enumerate}
\end{proposition}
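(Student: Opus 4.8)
The plan is to convert the geometric disjointness requirement in \eqref{p:eltol1_haar} into an avoidance condition for difference sets, and then to realize the translations $t$ by a recursive Cantor-scheme construction in which non-local compactness provides the room to branch.

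First I would record the reformulation. Writing $t=t(K,x,y)$ and $t'=t(K',x',y')$ and using that $G$ is abelian, a direct computation gives
\[
(K\cdot C^{-1}\cdot t)\cap(K'\cdot C^{-1}\cdot t')\neq\emptyset
\quad\Longleftrightarrow\quad
t\cdot t'^{-1}\in (K^{-1}\cdot K')\cdot(C\cdot C^{-1}).
\]
Thus \eqref{p:eltol1_haar} is equivalent to demanding that for all distinct triples the difference $t(K,x,y)\cdot t(K',x',y')^{-1}$ avoid the \emph{compact} set $(K^{-1}\cdot K')\cdot(C\cdot C^{-1})$. The decisive feature of the non-locally compact setting enters precisely here: in such a group every compact set has empty interior, hence is nowhere dense, so the complement of any compact obstacle is dense. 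This is what makes the proposition possible, and also what makes its analogue fail in the locally compact case, where $(K^{-1}K)(CC^{-1})$ is a neighbourhood of the identity and so a curve $x\mapsto t(K,x,y)$ continuous in $x$ could never have all of its pairwise differences avoid it.

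The workhorse would be a \emph{steering lemma}: given a compact $L\subset G$ and a nonempty open $U$, one can split $U$ into nonempty open pieces $U_0,U_1$ of prescribed small diameter with $\overline{U_0}\cdot\overline{U_1}^{-1}$ and $\overline{U_1}\cdot\overline{U_0}^{-1}$ disjoint from $L$; indeed, since $L$ is nowhere dense one picks $a\in U$ and then $b\in U$ close to $a$ with $a\cdot b^{-1}\notin L$ and $b\cdot a^{-1}\notin L$, and shrinks to balls using that $G\setminus L$ is open. Iterating this along the tree $2^{<\om}$ for a fixed obstacle $L$ yields a jointly continuous injection $2^\om\times 2^\om\to G$ (identifying $2^\om\times 2^\om$ with $2^\om$) whose difference set avoids $L$: if two parameters first differ at level $k$ then their images lie in the closures of two siblings chosen at that split, so their difference avoids $L$. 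Applying this with $L=(K^{-1}K)(CC^{-1})$ produces, for each $K$, a jointly continuous $\rho_K\colon 2^\om\times 2^\om\to G$ with image inside a fixed compact set and difference set avoiding this obstacle, the choices being made canonically (lexicographically least admissible basic open sets) so that $\rho_K$ is Borel in $K$ while $\rho_K(\cdot,y)$ stays continuous in $x$. Setting $t(K,x,y)=\rho_K(x,y)\cdot w(K)$ for a still-to-be-chosen Borel $w\colon\mathcal{K}(G)\to G$ then makes \eqref{p:eltol2_haar} automatic and already secures disjointness of all blocks sharing the same $K$, so that everything reduces to choosing $w$ with $w(K)\cdot w(K')^{-1}\notin P_{K,K'}$ for $K\neq K'$, where $P_{K,K'}:=(K^{-1}K')(CC^{-1})\cdot\operatorname{im}\rho_{K'}\cdot(\operatorname{im}\rho_K)^{-1}$ is compact.

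I expect this last step to be the main obstacle. The obstacles $P_{K,K'}$ are unbounded over $\mathcal{K}(G)$, and a non-locally compact group need not be $\sigma$-compact, so one \emph{cannot} stratify $\mathcal{K}(G)$ into countably many ``size classes'' each carrying a single fixed obstacle, nor can a naive scheme be run on $\mathcal{K}(G)$, since a node of such a scheme would mix together many $K$ with different unbounded obstacles. The intended way around it is to avoid all obstacles inside one recursion: fix a countable basis, run a single countable scheme that simultaneously refines a Borel coding of $K$ and branches via the steering lemma, arranging at each finite stage that $w(K)$ avoid $w(K')\cdot P_{K,K'}$ for the finitely many constraints then visible, each excluding only a nowhere-dense compact set, and keeping every choice canonical so that $K\mapsto w(K)$ is honestly Borel. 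Verifying this Borel-measurability, rather than the mere existence of the separating translations, is where I expect the real work to lie; once it is in place, $t(K,x,y)=\rho_K(x,y)\cdot w(K)$ satisfies the avoidance reformulation and hence both \eqref{p:eltol1_haar} and \eqref{p:eltol2_haar}.
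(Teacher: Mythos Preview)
The paper does not prove Proposition~\ref{p:eltol_haar}; it is imported from \cite{mienk} (with a typo corrected), so there is no in-paper argument against which to compare your proposal.

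On its own terms: your reformulation of condition~(\ref{p:eltol1_haar}) as the avoidance condition $t\cdot t'^{-1}\notin (K^{-1}K')(CC^{-1})$ is correct, the steering lemma is correct, and the construction of $\rho_K$ with Borel dependence on $K$ via lexicographically-least choices is sound, since the constraint $\overline{U_0}\,\overline{U_1}^{-1}\cap K^{-1}KCC^{-1}=\emptyset$ is open in $K$ for the Hausdorff topology.

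The genuine gap is exactly where you place it, but the sketch you offer does not close it. You propose to build $w$ by a scheme that ``simultaneously refines a Borel coding of $K$,'' handling at each finite stage ``the finitely many constraints then visible.'' A finite stage of any Borel coding of $\mathcal{K}(G)$ yields finitely many Borel \emph{cells}, not finitely many compact sets, and each cell still contains compacts $K$ of arbitrarily large diameter---this is precisely your own observation that a non-locally compact Polish group is never $\sigma$-compact. For $K$ in one cell and $K'$ in another the obstacles $P_{K,K'}$ are therefore not contained in any single compact set; their union can well be all of $G$, and no invocation of the steering lemma produces one open set per cell that dodges them all. At no stage are there ``finitely many constraints visible'': there are always continuum many, and they are unbounded. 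The difficulty you correctly diagnose two sentences earlier is thus not actually addressed by the paragraph that follows it. Whatever device \cite{mienk} uses here, it must let the choice at every stage depend measurably on the actual compact $K$ rather than on finitely many bits of an abstract code for it.
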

Note that the original proposition in \cite{mienk} contains a typo, namely, $C'$ should be read as $C$.
The following Proposition is the analogue of \cite[Theorem 3.1.]{mienk}, but the idea of the proof is different.
\begin{proposition}
\label{p:Bor_haar}
Let us denote the usual measure on $2^\omega$ by $\lambda$. There exists a partial function
$f:\mathcal{P}_{c}(G) \times 2^\om \to G$ satisfying the
following properties: $\forall \mu \in \mathcal{P}_c(G)$
\begin{enumerate}
 \item\label{p:Bor1_haar} $(\forall x \in 2^\om)\left[ (\mu, x) \in \dom(f) \Rightarrow f(\mu,x) \in \supp (\mu)\right]$,
 \item\label{p:Bor2_haar} $(\forall S \in \mathbf{\Pi}^0_2(2^\om \times G))\left[(\graph(f_\mu)
     \subset S) \Rightarrow ((\lambda \times \mu)(S)>0 )\right].$
\end{enumerate} 
\end{proposition}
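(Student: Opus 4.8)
The plan is to read condition \eqref{p:Bor2_haar} as an outer measure statement and then to realise it by a transfinite recursion rather than by any explicit formula. Write $\nu=\lambda\times\mu$, a finite Borel measure on the Polish space $2^\om\times G$. Since every subset of such a space has a $\mathbf{\Pi}^0_2$ measurable hull (intersect a decreasing sequence of open sets realising the outer measure), the $\mathbf{\Pi}^0_2$ sets containing $\graph(f_\mu)$ attain $\nu^\ast(\graph(f_\mu))$; hence \eqref{p:Bor2_haar} is equivalent to asking $\nu^\ast(\graph(f_\mu))>0$, i.e.\ that $\graph(f_\mu)$ lie in no $\nu$-null Borel set. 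First I would record why this cannot be done by a Borel map, in contrast with the category-flavoured \cite[Theorem 3.1]{mienk}: if $\mu$ is non-atomic and $f_\mu$ were Borel, then $\graph(f_\mu)$ would be a Borel set all of whose vertical sections are singletons or empty, so Fubini gives $\nu(\graph(f_\mu))=0$ and its hull would violate \eqref{p:Bor2_haar}. Thus $\graph(f_\mu)$ must be a genuinely non-measurable set, of positive outer and zero inner measure, which is precisely why a choice-theoretic construction is forced and the idea departs from \cite{mienk}.

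The construction is then carried out separately for each $\mu\in\mathcal{P}_c(G)$, producing the section $f_\mu=f(\mu,\cdot)$. Fix $\mu$ and note that $\supp(\mu)$ is compact and $\nu$ is concentrated on $2^\om\times\supp(\mu)$. Enumerate all $\nu$-null $\mathbf{\Pi}^0_2$ subsets of $2^\om\times G$ as $\{S_\al:\al<\mathfrak{c}\}$ (there are at most $\mathfrak{c}$ of them). I build $f_\mu$ by recursion, keeping its arguments pairwise distinct: having chosen distinct $x_\beta\in 2^\om$ and values $p_\beta\in\supp(\mu)$ for $\beta<\al$, at stage $\al$ I choose a fresh $x_\al\in 2^\om$ together with a point $p_\al\in\supp(\mu)$ with $(x_\al,p_\al)\notin S_\al$, and set $f_\mu(x_\al)=p_\al$.

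The only thing to verify is that such a choice always exists. Since $S_\al$ is $\nu$-null we have $\nu((2^\om\times\supp(\mu))\setminus S_\al)=1$, so by Fubini the Borel function $x\mapsto\mu(\{p\in\supp(\mu):(x,p)\notin S_\al\})$ equals $1$ for $\lambda$-almost every $x$. Hence the Borel set of admissible arguments is co-null, therefore uncountable Borel and of cardinality $\mathfrak{c}$ by the perfect set property; discarding the fewer than $\mathfrak{c}$ already-used arguments $x_\beta$ leaves an admissible $x_\al$, and any $p_\al$ in its (non-empty) section works. Property \eqref{p:Bor1_haar} holds because every value lies in $\supp(\mu)$, and \eqref{p:Bor2_haar} holds because a $\nu$-null $\mathbf{\Pi}^0_2$ set $S\supseteq\graph(f_\mu)$ would be some $S_\al$, contradicting $(x_\al,p_\al)\in\graph(f_\mu)\setminus S_\al$. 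Performing this recursion for every $\mu$ defines the required partial function $f$ on $\mathcal{P}_c(G)\times 2^\om$.

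The main obstacle is exactly the tension isolated at the outset: condition \eqref{p:Bor2_haar} forces $\graph(f_\mu)$ to be non-measurable, so no continuous or Borel selection (in particular nothing read off directly from the map $t$ of Proposition \ref{p:eltol_haar}) can succeed, and the argument must instead diagonalise against the continuum-sized family of null $G_\delta$ sets. The remaining care is bookkeeping: keeping the $x_\al$ pairwise distinct so that $f_\mu$ stays a function, and checking at each stage (via the Borelness and co-nullity of the set of admissible arguments, hence its size $\mathfrak{c}$) that the earlier choices never exhaust the available arguments.
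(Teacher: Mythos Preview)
Your proof is correct and follows essentially the same approach as the paper: for each fixed $\mu$, enumerate the $(\lambda\times\mu)$-null $G_\delta$ sets in type $\mathfrak{c}$ and diagonalise by transfinite recursion, using Fubini to find at each stage a fresh $x_\alpha$ whose vertical section in $S_\alpha$ is $\mu$-null, then pick $p_\alpha\in\supp(\mu)\setminus(S_\alpha)_{x_\alpha}$. Your cardinality justification (the set of admissible arguments is Borel and co-null, hence of size $\mathfrak{c}$) is a minor rephrasing of the paper's, which instead argues by contradiction that the already-used $x_\beta$'s cannot form a co-null set.
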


\begin{proof}
 Let $\mu \in \mc{P}_c(G)$ be fixed. We claim that there exists a partial function $f_\mu:2^\om \to \supp(\mu)$ such that whenever $S \in \mathbf{\Pi}^0_2(2^\om \times G)$ contains  $\graph(f_\mu)$ then $(\lambda \times \mu)(S)>0.$ This is enough to prove the proposition:
 the function defined as $f(\mu,x)=f_\mu(x)$ clearly satisfies the requirements.
 
 We construct the function $f_\mu$ by transfinite induction. Let $\{G_\alpha:\alpha<\mathfrak{c}\}$ be an enumeration of the $\lambda \times \mu$-null $G_\delta$ subsets of $2^\om \times G$. At stage $\alpha$ we choose a pair $(x_\alpha,f_\mu(x_\alpha)) \in (2^\om \times \supp(\mu)) \setminus G_\alpha$ such that $\{(x_\beta,f_\mu(x_\beta)):\beta \leq \alpha\}$ is a graph of a partial function. This can be done, since if $G_\alpha$ is of $\lambda \times \mu$ measure zero
 then $\lambda$ almost all vertical sections of $G_\alpha$ have $\mu$ measure zero. Therefore, the set $H=\{x_\beta:\beta<\alpha\} \cup \{x: \mu((G_\alpha)_x)>0\}$ is
 not the whole $2^\om$: otherwise, as the complement of the set $\{x_\beta:\beta<\alpha\}$ would be a $\lambda$-null set, $\{x_\beta:\beta<\alpha\}$ would be a $\lambda$-positive measurable set, which must contain a non-empty perfect set, contradicting that $|\alpha|<\mathfrak{c}$. Consequently, we can choose $x_\alpha \not \in H$ and $f_\mu(x_\alpha)  \in \supp(\mu) \setminus (G_\alpha)_{x_\alpha}$, which shows that the induction can be carried out.
\end{proof}
In order to prove Theorem \ref{t:nlc} we need the following simple observation, which is probably well-known.

\begin{lemma}
\label{l:univ}
Suppose that $X$ and $Y$ are Polish spaces, $U \subset Y$ is a universally measurable set and $f:X \to Y$ is a continuous function. Then $f^{-1}(U)$ is also universally measurable.
\end{lemma}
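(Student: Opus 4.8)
The plan is to reduce the universal measurability of $f^{-1}(U)$ to that of $U$ by means of pushforward measures. Fix an arbitrary Borel probability measure $\nu$ on $X$; by the definition of universal measurability it suffices to show that $f^{-1}(U)$ is measurable with respect to (the completion of) $\nu$. The first step is to form the pushforward measure $\mu$ on $Y$, defined by $\mu(B)=\nu(f^{-1}(B))$ for Borel $B \subset Y$. Since $f$ is continuous it is in particular Borel measurable, so preimages of Borel sets are Borel and $\mu$ is a genuine Borel probability measure on $Y$.

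Next, since $U$ is universally measurable it is $\mu$-measurable, which by the standard characterization of measurability for the completion of a measure means that there are Borel sets $B_1 \subset U \subset B_2$ with $\mu(B_2 \setminus B_1)=0$. Pulling these back, continuity of $f$ gives that $f^{-1}(B_1)$ and $f^{-1}(B_2)$ are Borel subsets of $X$, and clearly $f^{-1}(B_1) \subset f^{-1}(U) \subset f^{-1}(B_2)$. The key identity is then
\[
\nu\bigl(f^{-1}(B_2) \setminus f^{-1}(B_1)\bigr)=\nu\bigl(f^{-1}(B_2 \setminus B_1)\bigr)=\mu(B_2 \setminus B_1)=0,
\]
so $f^{-1}(U)$ is trapped between two Borel sets whose difference is $\nu$-null and is therefore $\nu$-measurable. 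As $\nu$ was arbitrary, $f^{-1}(U)$ is universally measurable.

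I do not expect any genuine obstacle in this argument: the whole proof is bookkeeping around the definitions. The only point that deserves a moment's care is the equivalence between being measurable for the completion of $\nu$ and being sandwiched between two Borel sets with $\nu$-null difference, which is the standard description of the completion and requires no new idea. One could instead argue abstractly that pushing forward along $f$ respects the relevant $\sigma$-algebras, but the explicit sandwiching formulation above seems the cleanest to write out.
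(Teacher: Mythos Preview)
Your proof is correct and essentially identical to the paper's: both fix a measure on $X$, push it forward along $f$, use universal measurability of $U$ with respect to the pushforward, and then pull back. The only cosmetic difference is that the paper writes $U$ as a Borel set union a null set while you sandwich $U$ between two Borel sets with null difference; these are equivalent descriptions of measurability in the completion.
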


\begin{proof}
Let $\mu$ be a Borel probability measure on $X$. Let us denote by $\nu$ the push-forward measure on $Y$, i.~e.~the measure defined by $\nu(B)=\mu(f^{-1}(B))$ for  every Borel set $B \subset Y$. It is easy to see that $\nu$ is a Borel probability measure on $Y$.
Consequently, as $U$ is $\nu$ measurable we have $U=B \cup N$ with some Borel set $B$ and $\nu(N)=0$. Then $f^{-1}(U)=f^{-1}(B) \cup f^{-1}(N)$ and $f^{-1}(B)$ is Borel while $\mu(f^{-1}(N))=\nu(N)=0$.

Thus $f^{-1}(U)$ is $\mu$-measurable, which finishes the proof. \end{proof}

We continue with the proof of Theorem \ref{t:nlc}, which is a word-by-word repetition of the proof of \cite[Theorem 4.1]{mienk} mutatis mutandis.
\begin{proof}[Proof of Theorem \ref{t:nlc}]
  Let $f$ be given by Proposition \ref{p:Bor_haar}.

It can be proved that the map $\mu \mapsto \supp(\mu)$ from $\mathcal{P}_c(G)$ to $\mathcal{K}(G)$ is Borel, see \cite[17.38]{cdst}. Let us denote this map by $\supp$. 
Let us also fix a Borel bijection $c:\mathcal{P}_c(G) \to 2^\omega$ (which we
think of as a coding map) and a continuous Borel probability measure
$\nu$ on $G$ with compact support $C$. Let $t:\mathcal{K}(G) \times 2^\om \times
2^\om \to G$ be the map from Proposition \ref{p:eltol_haar} with the $C$ fixed above, and define the map $\Psi \colon \mathcal{P}_c(G) \times 2^\om \times
G \to G$ by
\begin{equation}
\label{e:psi_haar}
\Psi(\mu,x,g)=g \cdot t(\supp(\mu),x,c(\mu)).
\end{equation}
Finally, define $X=\Psi(\graph(f)) \subset G$.

\begin{claim}
\label{cl:c_haar}
 $X$ is naively Haar null.
\end{claim}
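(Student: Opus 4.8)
The plan is to verify the claim with the continuous measure $\nu$ (of compact support $C$) already fixed in the construction as the witness measure; translating $\nu$ if necessary, I may assume $e \in C$. Since $G$ is abelian we have $gXh = (gh)X$, so it suffices to prove $\nu(aX) = 0$ for every $a \in G$. Writing $\theta(\mu,x) = t(\supp(\mu), x, c(\mu))$ and $p_{\mu,x} = f(\mu,x)\theta(\mu,x)$, the set $aX$ consists of the points $a p_{\mu,x}$, one for each $(\mu,x) \in \dom(f)$.

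First I would exploit the disjointness in Proposition \ref{p:eltol_haar}(\ref{p:eltol1_haar}). Setting $T(\mu,x) = \supp(\mu)\cdot C^{-1}\cdot\theta(\mu,x)$, these ``tubes'' are pairwise disjoint, and since $f(\mu,x) \in \supp(\mu)$ and $e \in C$ we get $p_{\mu,x} \in T(\mu,x)$. Hence each translated tube $aT(\mu,x)$ contains the single point $a p_{\mu,x}$ of $aX$, and by disjointness the triple $(\mu,x)$ is uniquely determined by any $z \in aX$. The relation $\{(z,\mu,x) : a^{-1}z \in T(\mu,x)\}$ is Borel with singleton $z$-sections, so by the Lusin--Novikov theorem it is the graph of a Borel partial map $\tau\colon z \mapsto (\mu,x)$ with Borel domain. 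I would then form the Borel map $\Xi(z) = (\tau(z),\, a^{-1}z\,\theta(\tau(z))^{-1})$ into $\mathcal{P}_c(G)\times 2^\om\times G$, for which a short computation gives $aX = \Xi^{-1}(\graph(f))$.

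Next I would push $\nu$ forward to $\sigma = \Xi_\ast\nu$ and reduce to showing that $\graph(f)$ is $\sigma$-null, since $\nu^\ast(aX) \le \sigma^\ast(\graph(f))$. Disintegrating $\sigma$ over the $\mathcal{P}_c(G)\times 2^\om$ coordinates, each conditional on the $G$-fibre over $(\mu,x)$ is, via the translation $z \mapsto a^{-1}z\,\theta(\mu,x)^{-1}$, a homeomorphic copy of $\nu$ restricted to the tube $aT(\mu,x)$, hence non-atomic because $\nu$ is continuous. As $\graph(f)$ meets every such fibre in the single point $f(\mu,x)$, the intended Fubini/disintegration computation yields $\sigma(\graph(f)) = 0$, and therefore $\nu(aX)=0$.

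The main obstacle is that $f$ is produced by the transfinite recursion of Proposition \ref{p:Bor_haar} and is far from Borel, so $\graph(f)$ need not be $\sigma$-measurable and the last Fubini step cannot be applied naively; a non-measurable partial graph can a priori even have full outer measure, so non-atomicity of the fibre measures alone is not enough. The real work is thus in producing a Borel $\sigma$-null \emph{hull} of $\graph(f)$, equivalently a Borel $\nu$-null superset of $aX$. I expect to resolve this using the continuity in Proposition \ref{p:eltol_haar}(\ref{p:eltol2_haar}): for fixed $\mu$ the map $x \mapsto \theta(\mu,x)$ is continuous, so the tubes vary continuously and the union $\bigcup_{(\mu,x)}aT(\mu,x)$ is analytic, hence $\nu$-measurable, while $\tau$ and the disintegration remain genuinely Borel. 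Working inside this measurable union, splitting off the countably many tubes of positive $\nu$-measure and invoking non-atomicity of the conditionals on the remaining null tubes, should confine $aX$ to a Borel $\nu$-null set. Lemma \ref{l:univ} is not needed here, but will be in the complementary argument that $X$ is \emph{not} Haar null.
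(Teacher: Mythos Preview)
Your disintegration strategy is far more elaborate than necessary, and the obstacle you yourself flag is genuine and not resolved by your proposed fix. After removing the countably many tubes $aT(\mu,x)$ of positive $\nu$-measure, the remaining tubes are each $\nu$-null but their union need not be: a pairwise disjoint family of $\nu$-null Borel sets can cover a set of full measure (think of singletons covering $[0,1]$). On that union $aX$ is a one-point selection from uncountably many fibres, exactly the kind of non-measurable ``graph'' you warned about, and nothing in your outline produces a Borel $\nu$-null hull for it. So the argument, as it stands, does not close.

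The paper's proof avoids all of this with a purely combinatorial observation: for every $g\in G$ one has $|Cg\cap X|\le 1$. Write $A^{\mu,x}=\supp(\mu)\cdot\theta(\mu,x)$; then $p_{\mu,x}\in A^{\mu,x}$ by Proposition~\ref{p:Bor_haar}(\ref{p:Bor1_haar}), and the $A^{\mu,x}$ are pairwise disjoint by Proposition~\ref{p:eltol_haar}(\ref{p:eltol1_haar}). Moreover $Cg$ can meet at most one $A^{\mu,x}$: if $Cg\cap A^{\mu,x}\neq\emptyset$ then (using that $G$ is abelian) $g\in \supp(\mu)\cdot C^{-1}\cdot\theta(\mu,x)$, and these last sets are the pairwise disjoint tubes of Proposition~\ref{p:eltol_haar}(\ref{p:eltol1_haar}). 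Hence $Cg\cap X$ is contained in a single $A^{\mu,x}$, which contains the single point $p_{\mu,x}$ of $X$. Since $\nu$ is continuous with support $C$, every translate $Xg$ meets $\supp(\nu)$ in at most one point, so $\nu(Xg)=0$ with no measurability issues whatsoever. The Lusin--Novikov uniformisation, the pushforward $\sigma$, and the disintegration are all unnecessary.
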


\begin{proof}
We prove that $\nu$ is witnessing this fact. Actually, we prove more: $|C \cap Xg|\leq 1$ for every $g \in G$, or equivalently
$|Cg \cap X|\leq 1$ for every $g \in G$. So let us fix $g \in G$.
\[
X = \Psi(\graph(f)) = \{ \Psi(\mu, x, f(\mu, x)) : (\mu, x) \in \dom(f) \} = 
\]
\[
\{f(\mu, x) \cdot t(\supp(\mu), x, c(\mu)) : (\mu, x) \in \dom(f) \},
\]
hence the elements of $X$ are of the form $g^{\mu,x} = f(\mu, x) \cdot
t(\supp(\mu), x, c(\mu))$. This element $g^{\mu,x}$ is clearly in $A^{\mu,x} =
\supp(\mu) \cdot t(\supp(\mu), x, c(\mu))$ by Property (\ref{p:Bor1_haar}) of
Proposition \ref{p:Bor_haar}, and the sets $A^{\mu,x}$ form a pairwise disjoint
family as $(\mu, x)$ ranges over $\dom(f)$, by Property (\ref{p:eltol2_haar}) of
Proposition \ref{p:eltol_haar}. Hence it suffices to show that $Cg$ can intersect at
most one $A^{\mu,x}$. But it can actually intersect at most one set of the
form $K \cdot t(K, x, y)(=\supp(\mu) \cdot t(\supp(\mu), x, y))$, since otherwise $g$ would be in the intersection of two
distinct sets of the form $K \cdot C^{-1} \cdot t(K, x, y)$, contradicting Property
(\ref{p:eltol2_haar}) of Proposition \ref{p:eltol_haar}.
\end{proof}

\begin{claim}
 \label{cl:c3_haar}
 $X$ is not Haar null.
\end{claim}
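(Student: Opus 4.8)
The plan is to argue by contradiction, showing that no compactly supported measure can witness the Haar nullness of $X$. Suppose $X$ were Haar null. By (\ref{e:cpt}) I may fix a witness measure $\mu \in \mc{P}_c(G)$ together with a universally measurable hull $U \supset X$ satisfying $\mu(gUh)=0$ for all $g,h \in G$; taking $g$ to be the identity, this gives in particular $\mu(Uh)=0$ for every $h \in G$. The whole goal is then to produce a single $h$ with $\mu(Uh)>0$, which will be the desired contradiction.

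Fix this $\mu$ and recall the partial function $f_\mu \colon 2^\om \to \supp(\mu)$ furnished by Proposition \ref{p:Bor_haar}. Writing $t_\mu(x)=t(\supp(\mu),x,c(\mu))$, which is continuous in $x$ by Property (\ref{p:eltol2_haar}) of Proposition \ref{p:eltol_haar}, I would consider the continuous map $\Phi \colon 2^\om \times G \to G$ given by $\Phi(x,y)=y \cdot t_\mu(x)$ and set $V=\Phi^{-1}(U)$. By Lemma \ref{l:univ} the set $V$ is universally measurable, hence $(\lambda \times \mu)$-measurable. Moreover $\graph(f_\mu) \subset V$: for $x \in \dom(f_\mu)$ one has $\Phi(x,f_\mu(x))=f_\mu(x)\cdot t_\mu(x)=g^{\mu,x} \in X \subset U$ by the definition $X=\Psi(\graph(f))$.

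Next I would pass from the merely measurable $V$ to a $\mathbf{\Pi}^0_2$ set: by outer regularity choose open $O_n \supset V$ with $(\lambda \times \mu)(O_n)<(\lambda \times \mu)(V)+1/n$ and put $S=\bigcap_n O_n$, a $G_\delta$ set with $\graph(f_\mu) \subset V \subset S$ and $(\lambda \times \mu)(S)=(\lambda \times \mu)(V)$. Property (\ref{p:Bor2_haar}) of Proposition \ref{p:Bor_haar} then forces $(\lambda \times \mu)(S)>0$, so $(\lambda \times \mu)(V)>0$. By Fubini there is an $x \in 2^\om$ with $\mu(V_x)>0$. But $V_x=\{y : y \cdot t_\mu(x) \in U\}=U \cdot t_\mu(x)^{-1}$, so $\mu(U \cdot t_\mu(x)^{-1})>0$, contradicting $\mu(Uh)=0$ for $h=t_\mu(x)^{-1}$. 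This contradiction shows that $X$ is not Haar null.

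The main obstacle is that the hull $U$ is only universally measurable rather than Borel, so Property (\ref{p:Bor2_haar}), which speaks exclusively about $\mathbf{\Pi}^0_2$ supersets of $\graph(f_\mu)$, cannot be applied to the pullback $V$ directly. This is precisely the role of Lemma \ref{l:univ}: it guarantees that the continuous pullback $V$ remains universally measurable, after which the elementary $G_\delta$ approximation of a measurable set supplies the $\mathbf{\Pi}^0_2$ set required to invoke Proposition \ref{p:Bor_haar}. One should also verify the harmless reduction, via (\ref{e:cpt}), to a witness measure of compact support, since this is exactly what makes $f_\mu$ and $t_\mu$ available.
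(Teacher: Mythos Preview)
Your proof is correct and is essentially the same as the paper's: your map $\Phi$ coincides with the paper's section map $\Psi_\mu$, your $V$ is the paper's $T$, and the only difference is the order of the last two steps---the paper first shows each section $T_x$ is $\mu$-null, deduces $(\lambda\times\mu)(T)=0$, and then passes to a $G_\delta$ cover to contradict Property~(\ref{p:Bor2_haar}), whereas you first pass to the $G_\delta$ cover to force $(\lambda\times\mu)(V)>0$ and then use Fubini to locate a section of positive measure.
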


Suppose the contrary, then by definition $X$ has a universally measurable Haar null hull $U$. Then using observation (\ref{e:cpt}) there exists a Borel probability measure $\mu$ with compact support witnessing that $U$ is Haar null. The section map $\Psi_{\mu} = \Psi(\mu,\cdot,\cdot)$ is continuous by \eqref{e:psi_haar} and Property (\ref{p:eltol2_haar}) of Proposition \ref{p:eltol_haar}. Now let $T=\Psi^{-1}_{\mu}(U)$, then $T$ is universally measurable by Lemma \ref{l:univ}. Notice that by the definition of $X$, $f$ and $T$ we have that $\graph(f_\mu) \subset T$. 

We show that for every $x \in 2^\om$ we have $\mu(T_{x})=0$. Suppose the contrary. By the
definition of $T$ we have that \[\Psi(\mu,x,T_x)=\Psi_\mu(\{x\} \times T_x) \subset \Psi_\mu(T)=
U.\] But $\Psi(\mu,x,\cdot):G \to G$ is a translation, so a translate of $U$
contains $T_x$, which is of positive $\mu$-measure, contradicting that $U$ is
Haar null with witness $\mu$.

Hence every vertical section of $T$ has $\mu$-measure zero and since $T$ is universally measurable, by the Fubini theorem $\lambda \times \mu (T) =0$. Therefore, by the regularity of Borel measures, there exists a $\mathbf{\Pi}^0_2(2^\omega \times G)$ set $S \supset T$ of $\lambda \times \mu$-measure zero.
But 

\[\graph(f_\mu) \subset T \subset S\]

which contradicts Property (\ref{p:Bor2_haar}) of
Proposition \ref{p:Bor_haar}. 

\end{proof}

This concludes the proof of Theorem \ref{t:nlc} and hence of Theorem \ref{t:main}.
\section{Remarks and open problems}

We finish our paper with collecting the most important open problems. Fremlin's problem in full generality remains unsolved.

\begin{question}
 Does there exist a naively Haar null non-Haar null set in every uncountable Polish group?
\end{question}

Since we have shown that the existence of an uncountable Borel normal subgroup of uncountable index implies the existence of a naively Haar null non-Haar null set, it would be interesting to know, which groups contain such a subgroup.

\begin{question}
 Does every uncountable non-archimedean compact group contain an uncountable Borel (or analytic) normal subgroup of uncountable index?
\end{question}

In order to show that the notion of naively Haar null sets is indeed very naive, it would be sufficient to prove that naively Haar null sets do not form a $\sigma$-ideal. 

\begin{question}
 Is it true that in every uncountable Polish group $G$ the naively Haar null sets do not form a $\sigma$-ideal? In particular, does there exist a naively Haar null set $S \subset G$ such that
 $G \setminus S$ is also naively Haar null?
\end{question}

As it was mentioned before, under CH the groups of the form $G \times G$ can be decomposed into two naively Haar null sets. A natural attempt to use the same idea in locally compact groups to obtain a ZFC result would be the following.

Let $S \subset G$ be a $\lambda$-positive set of cardinality $non(\mc{N})$ (i.~e.~the smallest cardinality of a positive set). Then a well-ordering of $S$ regarded as a subset $W$ of $S \times S$ is a naively Haar null set with witness $\lambda$, as every vertical section is $\lambda$-null, similarly $(S \times S) \setminus W$ is also naively Haar null. It is easy to see that $S \times S$ is $\lambda \times \lambda$-positive, however it is not clear, whether this set is naively Haar null. So the following question arises: does there exist a set $S \subset G$ of cardinality $non(\mathcal{N})$ such that $S \times S$ is not naively Haar null in $G \times G$?

Finally, we present a ZFC example of the failure of the $\sigma$-idealness in $\R^2$.

\begin{example}
 Davies \cite{davies} proved that $\R^2$ can be covered by countably many rotated graphs of functions. But a graph of a function is naively Haar null: an arbitrary continuous Borel probability measure concentrated on the range axis of the function will be a witness measure. 
\end{example}
 
\begin{question}
Is it possible to use Davies' construction to obtain an example to the failure of the $\sigma$-idealness in every abelian Polish group?
\end{question}

\bigskip

M\'arton Elekes

Alfr\'ed R\'enyi Institute of Mathematics

Hungarian Academy of Sciences

P.O. Box 127, H-1364 Budapest, Hungary

elekes.marton@renyi.mta.hu

www.renyi.hu/ $\tilde{}$ emarci

and

E\"otv\"os Lor\'and University

Department of Analysis

P\'az\-m\'any P. s. 1/c, H-1117, Budapest, Hungary

\bigskip

Zolt\'an Vidny\'anszky

Alfr\'ed R\'enyi Institute of Mathematics

Hungarian Academy of Sciences

P.O. Box 127, H-1364 Budapest, Hungary

vidnyanszky.zoltan@renyi.mta.hu

www.renyi.hu/ $\tilde{}$ vidnyanz

\end{document}